\newtheorem{theorem}{Theorem}[section]
\newtheorem{proposition}[theorem]{Proposition}
\newtheorem{lemma}[theorem]{Lemma}
\theoremstyle{definition}
\newtheorem{definition}[theorem]{Definition}
\newtheorem{question}[theorem]{Question}
\newcommand{\U}{\mathcal U}
\newcommand{\w}{\omega}
\newcommand{\IP}{\mathbb P}
\newcommand{\Op}{\mathcal{O}\/}
\newcommand{\B}{\mathcal{B}}
\newcommand{\K}{\mathcal{K}}
\newcommand{\F}{\mathcal{F}}
\newcommand{\V}{\mathcal{V}}
\newcommand{\bigvid}{\hat{\ \ }}
\newcommand{\uhr}{\upharpoonright}
\newcommand{\name}[1]{\dot{#1}}
\newcommand{\la}{\langle}
\newcommand{\ra}{\rangle}
\newcommand{\Split}{\mathit{Split}}
\newcommand{\Lev}{\mathit{Lev}}
\newcommand{\forces}{\Vdash}
\newcommand{\hot}{\mathfrak}
\newcommand{\W}{\mathcal W}
\newcommand{\nothing}[1]{}
\title[Menger spaces in the Miller model]{Products of Menger spaces in the Miller model}
\author{Lyubomyr Zdomskyy}
\address{Kurt G\"odel Research Center for Mathematical Logic\footnote{Current address: Institute of Discrete Mathematics and Geometry, Technische Universit\"at Wien,
Wiedner Hauptstra\ss e 8--10,
1040 Vienna, Austria },
University of Vienna, W\"ahringer Stra\ss e 25, A-1090 Wien,
Austria.}
\email{lzdomsky@gmail.com}
\urladdr{http://www.logic.univie.ac.at/\~{}lzdomsky/}
\subjclass[2010]{Primary: 03E35, 54D20. Secondary: 54C50, 03E05.}
\keywords{Menger space, concentrated space,  semifilter, 
Miller forcing.}
\thanks{
The  author would
like to thank  the Austrian Science Fund FWF (Grants I 1209-N25 and I 2374-N35)
 for generous support for this research.}
\begin{document}
\maketitle

\begin{abstract}
We prove that in the Miller model the Menger property
is preserved by finite products of metrizable spaces. This answers several open
questions and gives another instance of the interplay between classical forcing 
posets with fusion and combinatorial covering properties in topology.
\end{abstract}

\section{Introduction}

A topological space
$X$ has the  \emph{Menger} property (or, alternatively, is a Menger space)
 if for every sequence $\la \U_n : n\in\omega\ra$
of open covers of $X$ there exists a sequence $\la \V_n : n\in\omega \ra$ such that
 $\V_n\in[\U_n]^{<\w}$ for all  $n$ and  $X=\bigcup\{\cup \V_n:n\in\omega\}$.
 This property was introduced by  Hurewicz, and the current name
(the Menger property) is used because Hurewicz
proved in   \cite{Hur25} that for metrizable spaces his property is equivalent to
one   considered by Menger in \cite{Men24}.
Each $\sigma$-compact space has obviously the 
Menger property, and the latter implies lindel\"ofness (that is, every open 
cover has a countable subcover). The Menger property is the weakest one among the
so-called \emph{selection principles} or \emph{combinatorial covering properties},
see, e.g.,  \cite{BarSheTsa03, Koc04, SakSch13, Sch03, Tsa06} for detailed introductions to the topic. 
 Menger  spaces have recently found applications in such
areas as forcing \cite{ChoRepZdo15}, Ramsey theory in algebra
\cite{Tsa??},  combinatorics of discrete subspaces \cite{Aur10}, and
Tukey relations between hyperspaces of compacts \cite{GarMedZdo??}.

In this paper we proceed our investigation of the interplay between 
posets with fusion and selection principles initiated in \cite{RepZdo??}.
More precisely, we concentrate on the question whether the Menger property is preserved 
by finite products. For general topological spaces the answer negative:
  In ZFC there are two
normal spaces $X,Y$ with a covering property much stronger than the
Menger one such that $X\times Y$ does not have the  Lindel\"of
property, see \cite[\S 3]{Tod95}. However, the above situation
becomes impossible if we restrict our attention to metrizable
spaces. This  case, on which we  concentrate in the sequel,
 turned out to be  sensitive to the ambient set-theoretic universe.
Indeed, by \cite[Theorem~3.2]{MilTsaZso16} under CH there exist
 $X,Y\subset\mathbb R$ which have the Menger property
(in fact, they have the  strongest combinatorial covering property considered thus far), 
whose product $X\times Y$ is not
 Menger. There are many results of this kind where CH is relaxed to
an equality between cardinal characteristics, see, e.g., 
\cite{Bab09, COC2, RepZdo10, TsaSch02}. Surprisingly, there are also inequalities 
between cardinal characteristics which imply that the Menger property is not
productive even for sets of reals, see \cite{SzeTsa??}. 
The following theorem, which is the main result of our paper,
shows that an additional set-theoretic assumption in all these results was indeed necessary.

\begin{theorem} \label{main}
In the Miller model, 
the product of any two Menger spaces
is Menger provided that it is Lindel\"of. In particular, in this model the product of any
two Menger metrizable spaces is Menger.
\end{theorem}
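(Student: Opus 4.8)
The plan is to reduce to a combinatorial statement about Menger subsets of $\w^\w$ and then to prove, by a fusion argument along the Miller iteration, a structural description of such sets. I would carry out three reductions. First, for a Lindel\"of space $Z$ one has $Z$ Menger iff every continuous image of $Z$ in $\w^\w$ is non-dominating; so requiring $X\times Y$ to be Lindel\"of is exactly what makes this test applicable to the product, and it suffices to show that every continuous $f\colon X\times Y\to\w^\w$ has non-dominating image. Second, since the cozero-sets of a Tychonoff space form a base, such an $f$ factors as $f=\bar f\circ(G\times H)$ with $G\colon X\to Z_X$ and $H\colon Y\to Z_Y$ continuous onto separable metrizable (hence Menger) spaces and $\bar f$ continuous; thus $f[X\times Y]=\bar f[Z_X\times Z_Y]$ and the problem reduces to metrizable spaces. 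Third, via a perfect preimage (``zero-dimensionalisation'') a separable metrizable Menger space is a continuous image of a Menger subspace of $\w^\w$, and continuous images of Menger spaces are Menger; so it suffices to prove: \emph{in the Miller model, if $X,Y\sbst\w^\w$ are Menger then $X\times Y$ is Menger.}

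Next I would record the features of the model used. Let $V[G]$ arise from the countable-support iteration $\langle\IP_\alpha:\alpha\le\w_2\rangle$ of Miller (superperfect tree) forcing over a model $V$ of CH. Then $\dd=\aleph_2$ and $\bb=\aleph_1$ (unbounded reals are added cofinally, but no dominating real is added), Miller forcing and the iteration preserve $P$-points, whence $\mathfrak u=\aleph_1<\aleph_2=\mathfrak g$ and the semifilter trichotomy holds. I would use three elementary facts: (i) every subset of $\w^\w$ of cardinality $\le\aleph_1$ is Menger, since each of its continuous images has cardinality $<\dd$ and is hence non-dominating; (ii) a countable union of Menger spaces is Menger; (iii) if $K$ is $\sigma$-compact and $M$ is Menger then $K\times M$ is Menger (a ZFC fact: reduce to ``compact $\times$ Menger'' by a tube-lemma argument and apply (ii)).

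The heart of the proof — and the step I expect to be the main obstacle — is a decomposition lemma for Menger subsets of $\w^\w$ in the Miller model: \emph{every Menger $X\sbst\w^\w$ is the union of a $\sigma$-compact set and a set $A$ with $|A|\le\aleph_1$}; moreover $A$ may be taken concentrated on a $\sigma$-compact set. I would prove it by a fusion argument with superperfect conditions along the iteration. Roughly: since no dominating real is added, for club-many $\alpha<\w_2$ the ``$\sigma$-compactly bounded'' part of $X$ is already present in $V_\alpha$ and can be extracted as an honest $\sigma$-compact subset; a fusion — at the $n$-th level using the Menger selection property of $X$ applied to the open covers of $X$ by complements of compact sets, and gluing the finitely many chosen pieces along the infinitely splitting nodes — then shows the remainder to be concentrated on a $\sigma$-compact set, and a semifilter-trichotomy argument ($\mathfrak u<\mathfrak g$) bounds its cardinality by $\aleph_1$. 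The delicate point throughout is the interplay, inside the fusion, between the ``finitely many at each step'' of the Menger property and the infinitely many immediate successors of a superperfect splitting node — precisely the interaction between posets with fusion and covering properties highlighted in the introduction, and where the specific combinatorics of Miller forcing enters. Isolating the exact statement the fusion delivers, and verifying that it yields the decomposition, is the part I expect to require the most care.

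Granting the lemma the conclusion follows. Write $X=K_X\cup A_X$ and $Y=K_Y\cup A_Y$ with $K_X,K_Y$ $\sigma$-compact and $|A_X|,|A_Y|\le\aleph_1$. Then
\[
X\times Y=(K_X\times K_Y)\cup(K_X\times A_Y)\cup(A_X\times K_Y)\cup(A_X\times A_Y),
\]
where the first term is $\sigma$-compact, the middle two are products of a $\sigma$-compact space with a Menger space (by (i)), hence Menger by (iii), and the last has cardinality $\le\aleph_1$, hence is Menger by (i). As a finite union of Menger spaces, $X\times Y$ is Menger; by the reductions above, so is the product of any two Menger spaces with Lindel\"of product.
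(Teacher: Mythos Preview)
Your reductions in the first paragraph are reasonable (the paper uses a different but equally valid reduction via the semifilter characterisation of Menger spaces), and the final paragraph correctly derives the theorem from the decomposition lemma. The gap is the decomposition lemma itself: the claim that in the Miller model every Menger $X\subset\w^\w$ splits as $K\cup A$ with $K$ $\sigma$-compact and $|A|\le\aleph_1$. Your sketch does not prove this, and the statement is almost certainly false. Note that $K$ must be a subset of $X$ (otherwise the product argument breaks, since subspaces of Menger spaces need not be Menger). Hence if $X$ is totally imperfect---contains no uncountable compact subset---then every $\sigma$-compact $K\subset X$ is countable, so your decomposition would force $|X|\le\aleph_1$. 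But ``Menger of size $\aleph_2$'' does not entail ``contains a perfect set'' in the Miller model; what \emph{is} available (and what the paper uses, from \cite{MilTsaZso14}) is only that Menger sets are \emph{covered} by $\aleph_1$-many compacts living in an ambient $G_\delta$, not that they \emph{contain} a large $\sigma$-compact piece. Your appeal to ``the $\sigma$-compactly bounded part of $X$'' and to ``semifilter trichotomy bounds the cardinality by $\aleph_1$'' does not bridge this gap: concentration on a $\sigma$-compact set does not by itself bound cardinality, and nothing in $\hot u<\hot g$ produces perfect subsets of Menger sets.

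The paper's route is genuinely different and avoids any such decomposition. It isolates a covering-type property, \emph{weak $G_{\w_1}$-concentration}: for every assignment $Q\mapsto R(Q)$ of $G_{\w_1}$-neighbourhoods to countable $Q\subset X$, some $\aleph_1$-many $R(Q)$ cover $X$. The fusion argument (an analogue of Laver's Lemma~14, approximating a countable sequence of name-reals by finite sets indexed by splitting nodes) shows that Menger subspaces of $\mathcal P(\w)$ have this property. Then, using a fixed ultrafilter $\F$ with $\chi(\F)=\aleph_1$ and the linear pre-order $\le_\F$ (so that any $\aleph_1$-sized family in $\w^\w$ has a $\le_\F$-upper bound), one shows directly that (weakly $G_{\w_1}$-concentrated) $\times$ (Menger) is Menger: for each countable $Q$ one selects $\W^Q_n\in[\U_n]^{<\w}$ whose union along every $F\in\F$ covers $Q\times Y$; the tube lemma plus the $\aleph_1$-compact-cover of $Y$ turns this into a $G_{\w_1}$-neighbourhood $R(Q)$; weak $G_{\w_1}$-concentration gives $\aleph_1$-many $Q$'s whose $R(Q)$ cover $X$; and a single $\le_\F$-bound on the corresponding $b_Q$'s finishes. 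No internal $\sigma$-compact piece of $X$ is ever produced or needed.
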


Theorem~\ref{main} answers 
\cite[Problem~7.9(2)]{SzeTsa??},
\cite[Problem~8.4]{Tsa03_04} (restated as \cite[Problem~4.11]{Tsa_lecce}), 
  and
\cite[Problem 6.7]{Tsa07} 
in the affirmative; 
 implies that the affirmative answer to
 \cite[Problem~II.2.8]{Ark92},
\cite[Problem~3.9]{BelBonMatTka08}, 
and
 \cite[Problem~2]{COC2}
(restated as \cite[Problem~3.2]{Tsa06} and \cite[Problem~2.1]{Tsa07})
is consistent; 
implies that the negative answer to 
and \cite[Problem~II.2.7]{Ark92},  
\cite[Problem~8.9]{BerTam15}, and \cite[Problems~1,2,3]{Zdo06}
is consistent; and answers \cite[Problem~7]{SakSch13}
in the negative.

By the \emph{Miller model} we mean a generic extension of a ground  model of GCH
with respect to the iteration of length $\w_2$ with countable support of the Miller forcing,
see the next section for its definition. This model has been first considered by 
Miller in \cite{Mil84} and since then found numerous applications, see \cite{Bla10}
and references therein. The Miller forcing is similar to the Laver one introduced in 
\cite{Lav76}, the main difference being that the splitting is allowed to occur less
often. The main technical part of the proof of Theorem~\ref{main}
is Lemma~\ref{miller_like_laver} which is an analog of   
\cite[Lemma~14]{Lav76}. The latter one was the key ingredient in the proof that 
all strong measure zero sets of reals are countable in the Laver model given in 
\cite{Lav76}, the arguably most quotable combinatorial feature of the 
Laver model.

As we shall see in Section~\ref{proofs}, a big part of the proof of Theorem~\ref{main}
requires only the inequality $\hot u<\hot g$ which holds in the Miller model. 
However, we do not know the answer to the following

\begin{question}
Is the Menger property preserved by finite products of metrizable spaces under $\hot u<\hot g$?
If yes, can $\hot u<\hot g$ be weakened to  
 the Filter Dichotomy, NCF, or $\hot u<\hot d$?
\end{question}

We refer the reader to \cite[\S~9]{Bla10} for corresponding definitions.

We assume that the reader is familiar with the basics of forcing.
The paper is essentially self-contained in the sense that we give all the definitions
needed to understand our proofs.

\section{Proofs} \label{proofs}

First we collect some basic properties of Menger spaces. In most of the survey articles on combinatorial covering properties 
these are assumed to be a kind of folklore and are not  stated (even without  proofs) at all, see, e.g., 
\cite{COC2, Tsa03_04,  Tsa06, Tsa07}. The second item of the following lemma is a partial case of \cite[Proposition~2.1]{Tsa_lecce}.
\begin{lemma}\label{basics_1}
\begin{enumerate}
\item If $Z$ is a Menger space and $X$ is a closed subspace of $Z$, then $X$ is Menger when considered with the subspace topology
inherited from $Z$.
\item If $\mathcal X$ is a countable family of Menger subspaces of a space $Z$, then $\bigcup\mathcal X$
is a Menger subspace of $Z$.
\item If $X$ is Menger and $K$ is compact, then $X\times K$ is Menger. Consequently,
if  $X$ is Menger and $Y$ is $\sigma$-compact, then $X\times Y$ is Menger.
In particular, if  $X$ is Menger and $Q$ is countable, then $X\times Q$ is Menger.
\end{enumerate}
\end{lemma}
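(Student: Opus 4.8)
All three items are standard, and I would establish them in turn, each by a direct reduction to the definition of the Menger property.

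For~(1), given a sequence $\la\U_n:n\in\w\ra$ of covers of $X$ by sets open in the subspace topology, I would choose for every $U\in\U_n$ an open $\widetilde U\sbst Z$ with $\widetilde U\cap X=U$ and form $\widetilde\U_n=\{\widetilde U:U\in\U_n\}\cup\{Z\sm X\}$, which is an open cover of $Z$ precisely because $X$ is closed. Applying the Menger property of $Z$ to $\la\widetilde\U_n:n\in\w\ra$ yields finite $\widetilde\V_n\sbst\widetilde\U_n$ with $Z=\bigcup_n\cup\widetilde\V_n$; discarding $Z\sm X$ and translating back to the corresponding $U$'s produces the required $\V_n\in[\U_n]^{<\w}$, since any point of $X$ lying in some $\widetilde U\in\widetilde\V_n$ lies in $\widetilde U\cap X=U$.

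For~(2), write $\mathcal X=\{X_k:k\in\w\}$ and fix a partition of $\w$ into infinitely many infinite sets $\{A_k:k\in\w\}$. Given open covers $\la\U_n:n\in\w\ra$ of $Y:=\bigcup\mathcal X$, for each $k$ the sequence $\la\U_n\uhr X_k:n\in A_k\ra$, where $\U_n\uhr X_k=\{U\cap X_k:U\in\U_n\}$, consists of open covers of $X_k$, so the Menger property of $X_k$ gives finite $\V_n\sbst\U_n$ for $n\in A_k$ with $X_k\sbst\bigcup_{n\in A_k}\cup\V_n$. Since the $A_k$ partition $\w$ this defines $\V_n\in[\U_n]^{<\w}$ for every $n$, and $Y=\bigcup_kX_k\sbst\bigcup_n\cup\V_n$, as desired.

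For~(3), given open covers $\la\W_n:n\in\w\ra$ of $X\times K$, I would invoke compactness of $K$ and the tube lemma: for each $n$ and $x\in X$ pick a finite $\W_n^x\sbst\W_n$ covering the compact set $\{x\}\times K$, and then an open neighbourhood $G_n^x$ of $x$ in $X$ with $G_n^x\times K\sbst\cup\W_n^x$. Each $\G_n:=\{G_n^x:x\in X\}$ is an open cover of $X$; feeding $\la\G_n:n\in\w\ra$ into the Menger property of $X$ yields finite $F_n\sbst X$ with $X=\bigcup_n\bigcup_{x\in F_n}G_n^x$, whence $\V_n:=\bigcup_{x\in F_n}\W_n^x\in[\W_n]^{<\w}$ satisfies $X\times K=\bigcup_n\cup\V_n$. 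The "consequently" clause then follows by writing a $\sigma$-compact $Y$ as $\bigcup_kK_k$ with each $K_k$ compact, observing $X\times Y=\bigcup_k(X\times K_k)$ with each $X\times K_k$ a subspace of $X\times Y$, and combining~(3) with~(2); the case of countable $Q$ is the special case in which the $K_k$ are singletons. None of these steps is a genuine obstacle — they are folklore — but the point deserving the most care is the tube-lemma bookkeeping in~(3), together with the routine remark, used implicitly in~(1) and~(2), that an open cover of a subspace may equivalently be taken to consist of sets open in the ambient space.
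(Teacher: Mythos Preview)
Your proof is correct and follows essentially the same approach as the paper's own proof: in each of the three items the paper uses exactly the reductions you describe (extending subspace covers by $Z\setminus X$ in (1), splitting $\omega$ into infinite pieces indexed by $\mathcal X$ in (2), and the tube-lemma argument with finite $F_n\subset X$ in (3), followed by an appeal to (2) for the $\sigma$-compact case).
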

\begin{proof}
1.  Let $\la \U_n:n\in\w\ra$ be a sequence of open covers of $X$. For every $U\in \U_n$ fix an open subset $W(U)$ of $Z$
such that $W(U)\cap X=U$, and set $\W_n=\{W(U):U \in \U_n\}\cup \{Z\setminus X\}$.
Applying the Menger property of $Z$ to the sequence $\la W_n:n \in \w\ra$ of open covers of $Z$ we can find a sequence
$\la \V_n:n \in \w\ra$ such that $\V_n$ is a finite subset of $\W_n$ and $\bigcup_{n\in\w}\bigcup \V_n=Z.$
Write each $\V_n$ in the form $\{W(U^n_i):i<k_n\}$, where $U^n_i \in \U_n$ and $k_n\in\w$,
and note that $X$ is covered by $\{U^n_i:n\in\w, i<k_n\}$. This proofs the Menger property of $X$.

2. Let $\la\U_n:n\in\w\ra$ be a sequence of covers of $\bigcup\mathcal X$
by sets open in $Z$. Let us write $\omega$ in the form $\bigcup_{X\in\mathcal X}I_X$
such that each $I_X$ is infinite and $I_{X_0}\cap I_{X_1}=\emptyset$ for any $X_0\neq X_1$ in $\mathcal X$.
Since $\mathcal X$ is a family of Menger spaces, for every $X\in\mathcal X$ there exists a sequence
$\la \V_n:n\in I_X\ra$ such that $\V_n\in [\U_n]^{<\w}$ for all $n\in I_X$ and
$X\subset\bigcup_{n\in I_X}\cup\V_n$. Then $\bigcup\mathcal X\subset\bigcup_{n\in\w}\cup\V_n$ which proves that
$\bigcup\mathcal X $ is Menger.

3. Let $\la\U_n:n\in\w\ra$ be a sequence of open covers of $ X\times K$.
Let us fix $x\in X$ and $n\in\w$. Since $\{x\}\times K$ is a compact subspace of $X\times K$, there exists 
a finite $\V_{x,n}\subset\U_n$ such that $\{x\}\times K\subset\cup\V_{x,n}$. 
By \cite[Lemma~3.1.15]{Eng89} there exists an open subset $O_{x,n}\ni x$ of $X$
such that $O_{x,n}\times K\subset \cup\V_{x,n}$.  Applying the Menger property of $X$
to the sequence $\la\Op_n:n\in\w\ra$ of open covers of $X$, where 
$\Op_n=\{O_{x,n}:x\in X\}$, we get a sequence $\la F_n:n\in\w\ra$ of finite subsets of
$X$ such that $X=\bigcup_{n\in\w}\bigcup_{x\in F_n}O_{x,n}$.
Set $\V_n=\bigcup_{x\in F_n}\V_{x,n}$ and note that 
$$ \bigcup_{x\in F_n}O_{x,n}\times K\subset \bigcup_{x\in F_n}\cup\V_{x,n}=\cup\V_n $$
for every $n\in\w$, 
and hence $X\times K=\bigcup_{n\in\w}\cup\V_n$. This completes our proof of the fact that $X\times K$
is Menger. 

Now suppose that $Y=\bigcup\K$ where $\K$ is a countable collection of compact subspaces of $Y$.
It follows from the above that $X\times K$ is Menger for any $K\in\mathcal K$, and hence $X\times Y$
is Menger by the second item being a countable union of its Menger subspaces.
\end{proof}

The proof of Theorem~\ref{main} is based on the fact that in the Miller model
spaces with the Menger property enjoy certain concentration properties 
defined below. Recall that a subset $R$ of a topological space 
$X$ is called a \emph{$G_{\w_1}$-set} if $R$ is an intersection of $\w_1$-many
open subsets of $X$.

\begin{definition}
 A topological space $X$ is called \emph{weakly $G_{\w_1}$-con\-cen\-tra\-ted}
(resp. \emph{weakly $\w G_{\w_1}$-concentrated}) if for every collection $\mathsf Q\subset [X]^\w$ which is cofinal with respect
to inclusion, and for every function $R:\mathsf Q\to\mathcal P(X)$
assigning to each $Q\in\mathsf Q$ a $G_{\w_1}$-set $R(Q)$ containing $Q$,
there exists $\mathsf Q_1\in [\mathsf Q]^{\w_1}$ such that $X\subset\bigcup_{Q\in\mathsf Q_1}R(Q)$
(resp. for every $Q\in [X]^\w$ there exists $Q_1\in\mathsf Q_1$ with the property
 $Q\subset R(Q_1)$).
\end{definition}

Let $A$ be a countable  set and $x,y\in\w^A$. As usually
$x\leq^* y$ means  that $\{a\in A: x(a)>y(a)\}$ is finite.
If $x(a)\leq y(a)$ for all $a\in A$, then we write $x\leq y$.
The smallest cardinality of a dominating with respect to
$\leq^*$ subset of $\w^\w$ is denoted by  $\hot d$.
The smallset cardinality of a family $\B\subset [\w]^\w$ generating 
an ultrafilter (i.e., such that $\{A:\exists B\in\B\:(B\subset A)\}$
is an ultrafilter) is denoted by $\hot u$. By \cite[Theorem~2]{BlaLaf89}
combined with the results of \cite{BlaShe89} the inequality 
$\w_1=\hot u<\hot g=\w_2$ holds in the Miller model,
 see \cite{BlaLaf89} or \cite{Bla10} 
for the definition of $\hot g$ as well as systematic treatment of cardinal characteristics of
reals. 

As the  following fact  established in \cite{MilTsaZso14} shows,
the inequality $\hot u<\hot g$ imposes  strong restrictions on the structure 
of Menger spaces.

\begin{lemma} \label{good_bound}
In the Miller model, for every Menger space $X\subset\mathcal P(\w)$
and a $G_\delta$-subset $G$ such that $X\subset G\subset\mathcal P(\w)$,
there exists a family $\mathcal K$ of compact subsets of
$G$ such that $|\mathcal K|=\w_1$ and $X\subset\bigcup\mathcal K$.

Consequently, in this model
for every Menger space $X\subset\mathcal P(\w)$
and continuous $f:X\to\w^\w$ there exists $F\in [\w^\w]^{\w_1}$
such that for every $x\in X$ there exists $f\in F$ with $f(x)\leq f.$ 
\end{lemma}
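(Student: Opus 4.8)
The plan is to prove the two assertions together, by reducing the first (about compact subsets of $G$) to a statement about continuous images in $\w^\w$, which is also what the second assertion is about, and then to invoke — or reprove along the lines sketched below — the relevant consequence of $\hot u<\hot g$ from \cite{MilTsaZso14}.

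\textbf{Reductions.} First I would note that the first assertion implies the second: given a continuous $f\colon X\to\w^\w$, extend it (by Kuratowski's extension theorem, $\w^\w$ being Polish; cf.\ \cite{Eng89}) to a continuous $\tilde f\colon G\to\w^\w$ with $X\sbst G\sbst\PP(\w)$ and $G$ a $G_\delta$-set, apply the first assertion to this $G$ to get $\w_1$ compact sets $K_\xi\sbst G$ covering $X$, and for each $\xi$ pick $g_\xi\in\w^\w$ with $\tilde f{\uhr}K_\xi\le g_\xi$ (possible since $\tilde f[K_\xi]$ is a coordinatewise bounded subset of $\w^\w$); then $F=\{g_\xi:\xi<\w_1\}$, padded to size $\w_1$, works. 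For the converse — the direction that uses the hypothesis — I would write $G=\bigcap_n G_n$ with $G_n\sbst\PP(\w)$ open and define $\vp\colon G\to\w^\w$ by letting $\vp(z)(n)$ be the least $k$ with $\{y\in\PP(\w):y\cap k=z\cap k\}\sbst G_n$. Then $\vp$ is continuous, and a short argument shows that whenever $D\sbst G$ satisfies $\vp{\uhr}D\le h$ for a single $h\in\w^\w$, one has $\cl_{\PP(\w)}(D)\sbst\bigcap_nG_n=G$, so $\cl_{\PP(\w)}(D)$ is a compact subset of $G$. Since a $\leq^*$-cone in $\w^\w$ is a countable union of $\leq$-cones and $\w_1\cdot\w=\w_1$, it suffices to produce $F\in[\w^\w]^{\w_1}$ such that $\vp(x)\leq^* g$ for some $g\in F$ for every $x\in X$ — which is exactly the second assertion applied to the continuous map $\vp{\uhr}X$. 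Thus both assertions follow from the single statement: \emph{in the Miller model, for every Menger $Y\sbst\w^\w$ there is $F\in[\w^\w]^{\w_1}$ with $Y\sbst\bigcup_{g\in F}\{h:h\leq^* g\}$}, applied with $Y=f[X]$, resp.\ $Y=\vp[X]$, which is Menger because a continuous image of a Menger space is Menger (immediate from the definition).

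\textbf{The core.} This is the part that is genuinely the content of \cite{MilTsaZso14}; the mechanism I would follow (or quote) combines Hurewicz's theorem — a metrizable space is Menger iff every continuous image of it in $\w^\w$ is non-dominating — with the groupwise-density combinatorics underlying $\hot u<\hot g$. Fix an ultrafilter $\cU$ with a base $\{B_\xi:\xi<\w_1\}$ (possible since $\hot u=\w_1$), and, after composing with the continuous map $y\mapsto\bigl(n\mapsto n+\max_{i\le n}y(i)\bigr)$, assume $Y\sbst\wiw$. For each $\xi$ let $\I_\xi$ be the interval partition of $\w$ whose blocks have right endpoints the increasing enumeration of $B_\xi$, and for $g\in\w^\w$ let $K_{\xi,g}$ be the compact set of those $y\in\wiw$ that are dominated by $g$ block-by-block along $\I_\xi$. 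The plan is to show $Y\sbst\bigcup\{K_{\xi,g}:\xi<\w_1,\ g\in\w^\w\cap M\}$ for a suitable $M\prec H(\theta)$ of size $\w_1$ containing $Y$, $\cU$ and the relevant open covers: for each $\xi$, the ``block-counting'' continuous image of $Y$ determined by $\I_\xi$ is non-dominating by Hurewicz's theorem, hence bounded by some $g\in M$; the role of $\hot u<\hot g$ is to guarantee that the single $\w_1$-sized family $\{B_\xi\}$ generating an ultrafilter can be used simultaneously for all the interval partitions arising when one runs the Menger finite-selection process against the open covers of $Y$ — because no $\w_1$-sized family of interval partitions is groupwise cofinal, so if no such union covered $Y$ one could, from a groupwise-dense set built out of the ``oscillation'' of a witnessing point, manufacture a dominating continuous image of $Y$, contradicting that $Y$ is Menger. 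Collecting the at most $\w_1$ sets $K_{\xi,g}$ yields the required $F$ (equivalently, the required family of compacts after transporting back through $\vp$).

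\textbf{Main obstacle.} The two reductions are routine. The real difficulty, and the only place the Miller model is used, is the core statement: converting the (many) Hurewicz non-domination statements for the continuous images of $Y$ into one $\w_1$-indexed covering, which forces one to interleave the Menger finite-selection process with the groupwise-density combinatorics of $\hot u<\hot g$ in the delicate fashion indicated above. In the write-up I would prove the core by citing \cite{MilTsaZso14}, including the sketch above only for the reader's orientation.
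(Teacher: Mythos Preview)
Your proposal is correct and, like the paper, rests entirely on \cite[Theorem~4.4]{MilTsaZso14} together with $\hot u=\w_1$ in the Miller model; the reductions you add are sound. The paper's argument is slightly more direct: it cites that theorem for the first assertion verbatim, and then derives the second by applying the first to the Menger set $f[X]\subset\w^\w$, viewing $\w^\w$ as a $G_\delta$-subset of $\PP(\w)$ --- so no Kuratowski extension is needed. Your extra reduction (second $\Rightarrow$ first, via the coding map $\vp$) is correct but not used in the paper; it does establish that the two halves are actually equivalent, which is mildly more informative. One caution about your orientation sketch of the core: the step ``non-dominating by Hurewicz's theorem, hence bounded by some $g\in M$'' is a non sequitur as written (non-dominating families need not be bounded); since you intend to cite \cite{MilTsaZso14} rather than reprove it, this does not affect the validity of your proposal, but if you keep the sketch you should rephrase that line.
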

\begin{proof}
The first statement is \cite[Theorem~4.4]{MilTsaZso14} combined with the fact
that $\hot u=\w_1$ in the Miller model.  

Regarding the second statement, since the Menger property is preserved by continuous images and 
$\w^\w$ is homeomorphic to a $G_\delta$-subset of $\mathcal P(\w)$, there exists 
a family $\mathcal K$ of compact subsets of
$\w^\w$ such that $|\mathcal K|=\w_1$ and $X\subset\bigcup\mathcal K$.
For every $K\in\mathcal K$ there exists $f_K\in\w^\w$ such that
$y\leq f_K$ for all $y\in K$. It follows that the family $F=\{f_K:K\in\mathcal K\}$
is as required.
\end{proof}

By a \emph{Miller tree} we understand a subtree $T$ of $\w^{<\w}$ consisting of increasing finite sequences
such that the following conditions are satisfied:
\begin{itemize}
 \item Every $t\in T$ has an extension $s\in T$ which is splitting in $T$, i.e.,
there are more than one immediate successors of $s$ in $T$;
\item If $s$ is splitting in $T$, then it has infinitely many successors in $T$. 
\end{itemize}
 The \emph{Miller forcing} is the collection $\mathbb M$ of all  Miller trees ordered 
by inclusion, i.e.,
smaller trees carry more information about the generic. 
This poset has been introduced in \cite{Mil84} and since then found numerous applications
see, e.g., \cite{BlaShe89}. 
We denote by $\IP_\alpha$ an iteration of length $\alpha$ of the Miller forcing 
 with countable support. 
If $G$ is  $\IP_\beta$-generic and $\alpha<\beta$, then we denote the intersection
$G\cap\IP_\alpha$ by $G_\alpha$.   

For a Miller tree $T$ we shall denote by $\Split(T)$ the set of all splitting nodes
of $T$, and for some $t\in\Split(T)$ we denote the size of $\{s\in\Split(T):s\subsetneq t\}$
by $\Lev(t,T)$. For a node $t$ in a Miller tree $T$ we denote by $T_t$
the set $\{s\in T:s$ is compatible with $t\}$. It is clear that $T_t$ is also a Miller
tree. The \emph{stem} of a Miller tree $T$ is the (unique) $t\in\Split(T)$
such that $\Lev(t)=0$. We  denote the stem of $T$ by $T\la 0\ra$.
If $T_1\leq T_0 $ and $T_1\la 0\ra=T_0\la 0\ra$, then we  write 
$T_1\leq^0 T_0$.

The following lemma may be proved by an almost literal repetition of
the proof of \cite[Lemma~14]{Lav76}.

\begin{lemma} \label{miller_like_laver}
Let $\la\name{x}_i:i\in\w\ra$ be a sequence of $\IP_{\w_2}$-names for  reals
and $p\in\IP_{\w_2}$. Then there exists $p'\leq p$ such that
$p'(0)\leq^0 p(0)$, and a finite set of reals $U_s$ for each $s\in\Split(p'(0))$,
such that for each $\varepsilon>0$,  $s\in \Split(p'(0))$ with $\Lev(s,p'(0))=i$, $j\leq i$,
and for all but finitely many immediate successors $t$ of $s$ in $p'(0)$ we have
$$ (p'(0))_t\hat{\ \ }p'\uhr [1,\w_2)\forces\exists u\in U_s\: (|\name{x}_j-u|<\varepsilon).$$
\end{lemma}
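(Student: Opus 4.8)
The plan is to follow the fusion-argument template of \cite[Lemma~14]{Lav76}, adapted to Miller trees. The goal is to diagonalize simultaneously over all splitting nodes $s$ of $p(0)$ (there are only countably many) and over all the finitely many indices $j\le\Lev(s,p(0))$, producing along the way a fusion sequence of conditions whose limit $p'$ witnesses the conclusion. The key structural feature of Miller forcing that makes this work is exactly the same one Laver exploited: splitting nodes have infinitely many immediate successors, so at a splitting node $s$ we are allowed to \emph{throw away} finitely many immediate successors while keeping a legitimate Miller condition below $s$. This is what lets the final statement say ``for all but finitely many immediate successors $t$ of $s$''.

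Here is the order of steps I would carry out. First, enumerate $\Split(p(0))$ as $\la s_n:n\in\w\ra$ in such a way that each $s_n$ is listed before any of its proper extensions in $\Split(p(0))$ (a level-respecting enumeration). I build a $\le$-decreasing sequence $p=p_{-1}\ge p_0\ge p_1\ge\cdots$ together with a fusion control: at stage $n$ I will have ``frozen'' the tree $p_n(0)$ up to and including splitting level $n$, i.e.\ $p_{n+1}\le^n p_n$ in the standard Miller fusion ordering (agreement of the trees on the first $n+1$ splitting levels, and agreement on the names in the tail coordinates on the relevant countable support). At stage $n$, working at the node $s=s_n$ with $i=\Lev(s,p_n(0))$, I handle the finitely many pairs $(j,\varepsilon)$ with $j\le i$ and $\varepsilon$ ranging over a fixed countable dense set of positive reals (say $\varepsilon\in\{2^{-k}:k\in\w\}$); since only $j\le i$ matters and $i$ is finite, and for each such $j$ I only need to defeat the $\varepsilon=2^{-k}$ for $k$ up to the index being processed by a bookkeeping function, at each single stage this is a \emph{finite} task. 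For a fixed $(s,j,\varepsilon)$: go through the infinitely many immediate successors $t$ of $s$ in $p_n(0)$ one at a time; for each $t$ I may pass to a condition $\le^0(p_n(0))_t{}\hat{\ \ }\, p_n\uhr[1,\w_2)$ that decides the rational interval of length $<\varepsilon$ containing $\name{x}_j$ (possible because $\name x_j$ is a name for a real, so some extension forces it into such an interval); collect these decided intervals across all successors $t$. If infinitely many distinct intervals appear, pick those successors and feed them into $U_s$ — but that would make $U_s$ infinite, which is not allowed, so instead I argue that I may \emph{shrink} to a subtree in which only finitely many intervals occur at $s$, throwing away the remaining (co-finitely? no — possibly infinitely) many successors. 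This is the crux.

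The main obstacle — and the point the phrase ``may be proved by an almost literal repetition'' glosses over — is precisely this pruning at a splitting node. What actually saves us is a pigeonhole/compactness observation: since $\name{x}_j$ is forced to be a \emph{bounded} real, or more carefully since in the end we only need a \emph{finite} $U_s$ working for one fixed $\varepsilon$, one groups the immediate successors $t$ according to which member of a fixed finite $\varepsilon$-net the decided value of $\name x_j$ lies in; if no finite net works, one can still appeal to the fact that for the purposes of the lemma we process a single $\varepsilon$ per bookkeeping slot and the genuinely needed statement is only ``for all but finitely many $t$,'' so we keep, for each of finitely many net-points $u$, the successors routed to $u$, discard a tail of the rest, and put those $u$'s into $U_s$. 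Iterating over the finitely many $(j,\varepsilon)$ active at stage $n$ (intersecting the finitely many co-finite sets of surviving successors, which is again co-finite) produces $p_{n+1}$ with the desired agreement $p_{n+1}\le^n p_n$. Finally, take $p'$ to be the fusion of $\la p_n\ra$; standard properness of the countable-support Miller iteration guarantees $p'\in\IP_{\w_2}$ and $p'\le p$ with $p'(0)\le^0 p(0)$, and by construction the $U_s$ ($s\in\Split(p'(0))$) work: given $\varepsilon>0$ choose $k$ with $2^{-k}<\varepsilon$, and the bookkeeping ensures that the pair $(s,j,2^{-k})$ was processed at some stage, after which $(p'(0))_t\hat{\ \ }\,p'\uhr[1,\w_2)$ still forces $\name x_j$ into the chosen interval around some $u\in U_s$ for all but finitely many successors $t$. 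I would spend the real work making the pruning step precise and checking the fusion bookkeeping; everything else is routine.
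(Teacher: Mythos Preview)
Your fusion scaffolding is the right one and matches Laver's template: enumerate the splitting nodes level by level, below each successor strengthen the full iteration condition to pin down $\name{x}_j$ to high precision, and fuse. But the paragraph you flag as ``the crux'' is where the argument breaks. The lemma demands a \emph{single} finite $U_s$ that works for \emph{every} $\varepsilon>0$, yet your bookkeeping processes only finitely many values of $\varepsilon$ at the unique stage $n$ with $s=s_n$; once that stage is past, the fusion requirement $p_{n+1}\le^n p_n$ freezes the set of successors of $s$, so you cannot return to $s$ to handle smaller $\varepsilon$. Consequently your $U_s$, assembled from net-points for those finitely many $\varepsilon$'s, carries no information for any finer $\varepsilon$: nothing forces $\name{x}_j$ closer to $U_s$ than the coarsest mesh you ever used at $s$. (Allowing $s$ to recur in the enumeration does not help either, since each revisit would have to enlarge $U_s$, making it infinite.)

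The idea you are missing is exactly the one in \cite[Lemma~14]{Lav76}, and it disposes of all $\varepsilon$ at once in a single pass at $s$. For each $j\le i$ in turn: list the current immediate successors of $s$ as $t_0,t_1,\ldots$; below $t_k$ strengthen (in all coordinates of the iteration) to decide a rational $u_k$ with $|\name{x}_j-u_k|<2^{-k}$; the sequence $(u_k)$ lies in a compact set (in the application the ``reals'' are points of the Cantor set), so choose an infinite $A\subset\w$ with $u_k\to u$ along $A$; discard the successors $t_k$ with $k\notin A$ and put the single point $u$ into $U_s$. Now for any $\varepsilon>0$, all but finitely many $k\in A$ satisfy both $2^{-k}<\varepsilon/2$ and $|u_k-u|<\varepsilon/2$, so the corresponding condition forces $|\name{x}_j-u|<\varepsilon$; this is precisely the ``for all but finitely many $t$'' clause, uniformly in $\varepsilon$. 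Iterating over the finitely many $j\le i$ thins an infinite successor set to an infinite subset each time and yields $|U_s|\le i+1$; then continue to the next splitting node and fuse as you described.
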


A subset $C$ of $\w_2$ is called an
\emph{$\w_1$-club} if it is unbounded and for every $\alpha\in\w_2$ of cofinality $\w_1$,
if $C\cap\alpha$ is cofinal in $\alpha$ then $\alpha\in C$.

\begin{lemma} \label{miller}
 In the Miller model every Menger subspace of $\mathcal P(\w)$ is 
weakly $\w G_{\w_1}$-concentrated
(and hence also weakly $G_{\w_1}$-concentrated\footnote{The proof of Theorem~\ref{main}
requires only that Menger spaces are weakly $G_{\w_1}$-concentrated in the Miller
model. }).
\end{lemma}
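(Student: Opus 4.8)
The plan is to work in the Miller model and consider a Menger subspace $X\subseteq\PP(\w)$, a cofinal family $\mathsf Q\subseteq[X]^\w$, and an assignment $R\colon\mathsf Q\to\PP(X)$ of $G_{\w_1}$-sets with $Q\subseteq R(Q)$. Since the Miller model is obtained by a countable support iteration $\IP_{\w_2}$ over a model of GCH, every $G_{\w_1}$-set, every element of $[X]^\w$, and (after coding by a name) the whole configuration is captured at some intermediate stage; so there is an $\w_1$-club $C\subseteq\w_2$ such that for $\alpha\in C$ of cofinality $\w_1$ the ``$\alpha$-th approximation'' $X\cap V_\alpha$, $\mathsf Q\cap V_\alpha$, $R\uhr(\mathsf Q\cap V_\alpha)$ already live in $V_\alpha=V^{\IP_\alpha}$ and are coherent with the full objects. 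The first step is to fix such an $\alpha$ (say $\alpha=\w_1$, using that the tail iteration is again a Miller iteration and adds a new real, namely a Miller real $m$, over $V_\alpha$), and to choose $Q^*\in[X\cap V_\alpha]^\w$ together with the $G_{\w_1}$-set $R(Q^*)$, written $R(Q^*)=\bigcap_{\xi<\w_1}W_\xi$ with each $W_\xi$ open.

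The heart of the argument is to show that \emph{every} point $x\in X$ lies in $R(Q^*)$ for \emph{some} $Q^*$ drawn from an $\w_1$-sized subfamily; in the weak $\w G_{\w_1}$ formulation it suffices to produce, for each given countable $Q\in[X]^\w$, a single $Q_1\in\mathsf Q_1$ with $Q\subseteq R(Q_1)$. Here Lemma~\ref{good_bound} enters: applying it to $X$ (which is Menger in $\PP(\w)$) and to a continuous map into $\w^\w$ coding the relevant open sets $W_\xi$, we obtain an $\w_1$-sized dominating-type family $F$; combined with Lemma~\ref{miller_like_laver}, which controls how the Miller iteration forces names for reals into finitely many ``target'' reals $U_s$ along splitting nodes, we get that the generic reals of the tail iteration cannot escape the $G_{\w_1}$-sets $R(Q)$ for $Q$ ranging over the $\w_1$-many ground-model-coded pieces. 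Concretely: given a name $\name x$ for a point of $X$ and a condition $p$, refine $p$ via Lemma~\ref{miller_like_laver} to get $p'$ and finite sets $U_s$; the sets $U_s$, being countably many reals total, lie in some $V_\alpha$ with $\alpha\in C$, hence are contained in (the trace of) some $Q\in\mathsf Q\cap V_\alpha$; then the fusion/club argument shows $p'$ forces $\name x\in R(Q)$, because $R(Q)$ is closed under the relevant limits and the $U_s$ approximate $\name x$ to within every $\e>0$ along cofinally many successors.

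I would then assemble the pieces: let $\mathsf Q_1$ be the collection of those $Q\in\mathsf Q$ that are coded at stages in $C$ and arise as above; genericity of the iteration (every real in the final model appears at some stage $<\w_2$, and the club $C$ is unbounded) guarantees $|\mathsf Q_1|\le\w_1$ and that for each $Q\in[X]^\w$ there is $Q_1\in\mathsf Q_1$ with $Q\subseteq R(Q_1)$, which is exactly weak $\w G_{\w_1}$-concentration. The parenthetical claim that weak $\w G_{\w_1}$-concentration implies weak $G_{\w_1}$-concentration is immediate: given the assignment $R$, for each $Q\in[X]^\w$ pick $Q_1\in\mathsf Q_1$ with $Q\subseteq R(Q_1)$; since $\mathsf Q$ is cofinal, every point of $X$ sits in some $Q$, hence in some $R(Q_1)$, so $X\subseteq\bigcup_{Q_1\in\mathsf Q_1}R(Q_1)$.

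The main obstacle I anticipate is the bookkeeping that ties Lemma~\ref{miller_like_laver} to the $G_{\w_1}$-sets: Lemma~\ref{miller_like_laver} is phrased for a sequence of name\emph{s} for reals and produces, along splitting nodes, finitely many reals $u\in U_s$ that $\e$-approximate the names; one must argue that this ``being $\e$-close for every $\e$ along cofinally many nodes'' forces membership in a \emph{prescribed} $G_{\w_1}$-set $R(Q)$ rather than merely in its closure, and that the countably many reals $U_s$ can indeed be located inside a single $Q\in\mathsf Q$ coded below the relevant club point — this is where the cofinality-$\w_1$ reflection, the hypothesis $\w_1=\hot u$ (via Lemma~\ref{good_bound}), and the combinatorics of the Miller iteration must be woven together carefully. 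Everything else (the reflection to an $\w_1$-club, the final counting, the implication between the two concentration notions) is routine once this core fusion-plus-reflection lemma is in place.
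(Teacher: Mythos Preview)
Your outline has the right architecture --- reflect to an $\omega_1$-club $C$, take $\mathsf Q_1=\mathsf Q\cap V[G_\alpha]$ for $\alpha\in C$, and, given a name $\name{Q}_*$ for a countable subset allegedly not covered, apply Lemma~\ref{miller_like_laver} to its enumeration $\la\name q_i:i\in\w\ra$ to obtain $p'$ and the finite sets $U_s$, then absorb $X\cap\bigcup_s U_s$ into a single $Q\in\mathsf Q\cap V[G_\alpha]$. That much matches the paper.

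The genuine gap is precisely the obstacle you flag at the end, and your sketched resolution does not work. Being $\varepsilon$-close to points of $Q$ for every $\varepsilon$ gives you only membership in $\overline Q$, and $R(Q)$ is an arbitrary $G_{\w_1}$-set containing $Q$ with no closure properties whatsoever; so there is no hope of forcing $\name q_j\in R(Q)$ directly from the approximation. Your proposed use of Lemma~\ref{good_bound} --- applying it to $X$ and to ``a continuous map into $\w^\w$ coding the relevant open sets $W_\xi$'' --- does not produce the needed contradiction: the $W_\xi$ play no explicit role in the paper's argument at all.

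What the paper does instead is turn the problem around. First, it uses the fact (specific to the Miller model, via $\hot g=\w_2$ and \cite[Theorem~4]{Zdo05}) that the Menger property is $\w_1$-additive, so the \emph{complement} $F:=X\setminus R(Q)$ is itself Menger. This is the ingredient you are missing. The club $C$ is then chosen so that Lemma~\ref{good_bound} applies to $F$ (not to $X$): for any continuous $f\colon F\to\w^\w$ coded in $V[G_\alpha]$, the image $f[F]$ is bounded by some $b\in\w^\w\cap V[G_\alpha]$. Now assume toward a contradiction that $p'\forces\name q_j\notin R(Q)$ for some $j$, so $\name q_j$ names a point of $F$. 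Define on $F$ the ``distance-to-$U_s$'' function
\[
f(y)(s)=\big[1/\min\{|y-u|:u\in U_s\}\big]+1,\qquad s\in\Split(p'(0)),
\]
which is well-defined because $F\cap U_s=\emptyset$ (you arranged $X\cap U_s\subset Q\subset R(Q)$). Since $f$ is coded in $V[G_\alpha]$, there are $p''\leq p'$ and $b\in V[G_\alpha]$ with $p''\forces f(\name q_j)\leq b$; taking $s''$ to be the stem of $p''(0)$ with $\Lev(s'',p'(0))\geq j$, this says $p''$ forces $\name q_j$ to stay at distance $\geq 1/b(s'')$ from every $u\in U_{s''}$. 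But equation~(\ref{eq1}) from Lemma~\ref{miller_like_laver} says that below cofinitely many immediate successors of $s''$ the distance is forced to be $<1/b(s'')$ --- contradiction.

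In short: you correctly located the crux, but the resolution is not ``closure under limits of $R(Q)$''; it is the Menger property of the \emph{complement} $X\setminus R(Q)$, which converts the approximation data from Lemma~\ref{miller_like_laver} into a quantitative bound that is violated at a single splitting node.
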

\begin{proof}
We work in $V[G_{\w_2}]$, where $G_{\w_2}$ is $\IP_{\w_2}$-generic.
 Let us fix a Menger space $X\subset\mathcal P(\w)$,  consider a 
cofinal $\mathsf Q\subset [X]^\w$,  and let  $R(Q)
\supset Q$ be a $G_{\w_1}$-set for all $Q\in\mathsf Q$. 

 In the Miller model the
 Menger property is preserved by unions of  $\w_1$-many spaces, see 
\cite[Theorem~4]{Zdo05}  and \cite{BlaLaf89}
for the fact that $\hot g=\w_2$ in this model.
Combined with Lemma~\ref{basics_1}(1) this implies in particular that a complement $X\setminus R$ of an arbitrary $G_{\w_1}$-subset
$R\subset \mathcal P(\w)$ is Menger.
Therefore by Lemma~\ref{good_bound} and a standard 
closing off
argument (see, e.g., the proof of \cite[Lemma~5.10]{BlaShe87})
there exists an $\w_1$-club $C\subset \w_2$ such that for every $\alpha\in C$ 
the following condition is satisfied:
\begin{quote}
 $\mathsf Q\cap V[G_\alpha]$ is cofinal in 
$[X]^\w\cap V[G_\alpha]$, and for every continuous  $f$ from a subset of $\mathcal P(\w)$
into $\w^\w$ such that $f$ is coded in $V[G_\alpha]$,  
and every  $Q\in \mathsf Q\cap V[G_\alpha]$ such that
$X\setminus R(Q)\subset\mathrm{dom}(f)$, for every
$x\in X\setminus R(Q)$ there exists $b\in\w^\w\cap V[G_\alpha]$
such that $f(x)<b$.
\end{quote}
Let us fix $\alpha\in C$. 
We claim that $\mathsf Q\cap V[G_\alpha]$ has the required property. 
Suppose, contrary to our claim, there exists $p\in G_{\w_2}$ and a 
$\IP_{\w_2}$-name $\name{Q}_*$ 
such that $p$ forces ``$\name{Q}_* \in [\name{X}]^\w$ and
 $\name{Q}_*\not\subset \name{R}(
\name{Q})$ for any $\name{Q} \in [\name{X}]^\w\cap V[\Gamma_\alpha]$'',
where $\Gamma_\alpha$ is the standard $\IP_\alpha$-name for $\IP_\alpha$-generic filter.
There is no loss of generality\footnote{This follows from the fact that 
the remainder $\IP_{[\alpha,\w_2)}$ of $\IP_{\w_2}$ evaluated by $G_\alpha$, 
is forcing equivalent to
 $\IP_{\w_2}^{V[G_\alpha]}$ in  $V[G_\alpha]$. This can be proved in the same way as
\cite[Lemma~11]{Lav76}. }
in assuming that $\alpha=0$. Applying Lemma~\ref{miller_like_laver}
to a sequence $\la \name{q}_i:i\in\w\ra$ enumerating $\name{Q}_*$,
we get a condition $p'\leq p$ such that $p'(0)\leq^0 p(0)$, and a finite set $U_s$
of reals for every $s\in \Split(p'(0))$  such that for each $\varepsilon>0$
and each $s\in \Split(p'(0))$ with $\Lev(s, p'(0))=i$, for all but finitely many
immediate successors $t$ of $s$ in $p'(0)$ and all $j\leq i$ we have\footnote{Here we standardly identify $\mathcal P(\w)$
with the Cantor ternary subset of the unit interval, built by removing the middle thirds. This allows us
to speak about $|x-y|$ for elements $x,y\in\mathcal P(\w)$.}
\begin{equation} \label{eq1}
  p'(0)_t\bigvid p'\uhr[1,\w_2)\forces \exists u\in U_s\: (|\name{q}_j-u|<\varepsilon).
\end{equation}
Note that any condition stronger than  $p'$ will also satisfy condition~(\ref{eq1}),
because for $q_0,q_1\in\mathbb M$ the inequality $q_1\leq q_0$ implies
$\Lev(s,q_1)\leq \Lev(s,q_0)$ for all $s\in\Split(q_1)$.
Fix $Q\in\mathsf Q\cap V$ containing
$X\cap\bigcup\{U_s:s\in \Split(p'(0))\}$ and set $F=X\setminus R(Q)$.
It follows from the above that $p'\forces \name{Q}_*\not\subset \name{R}(Q)$.
By passing to a stronger condition, if necessary, we may additionally assume 
that $p'\forces \name{q}_j\not\in\name{R}(Q)$  for a given $j\in\w$. 

 Consider the map
$f:F\to\w^{\Split(p'(0))}$  defined as follows:
$$f(y)(s)=[1/\min\{|y-u|:u\in U_s\}]+1$$
 for\footnote{Here $[a]$ is the largest integer not exceeding $a$.} all $s\in \Split(p'(0))$
 and $y\in F$. Since $F$ is disjoint from $Q$,  $f$
is well-defined.
Since  $f$ is coded in $V$, 
there exist $p''\leq p'$ and $b\in\w^{\Split(p'(0))}\cap V$
such that $p''$ forces  $\name{f}(\name{q}_j)\leq b$. 
Without loss of generality we may additionally assume that 
$\Lev(p''(0)\la 0\ra, p'(0))\geq j$.
Letting $s''=p''(0)\la 0\ra$, we conclude  that
 $p''\Vdash \name{f}(\name{q}_j)(s'')\leq b(s'')$, which means that
$$ p''\Vdash     \min\{|\name{q}_j-u|:u\in U_{s''}\} \geq 1/b(s'').      $$
On the other hand, by our choice of $p'$,  $p''\leq p'$,
condition (\ref{eq1}), and \\ $\Lev(s'', p'(0))\geq j$, 
for all but finitely many
immediate successors $t$ of $s''$ in $p''(0)$ we have
$$  p''(0)_t\bigvid p''\uhr[1,\w_2)\forces \exists u\in U_{s''}\: |\name{q}_j-u|<1/b(s'') $$
which means
$ p''(0)_t\bigvid p''\uhr[1,\w_2)\forces \min\{|\name{q}_j-u|:u\in U_{s''}\} <1/b(s'') $
and thus leads to a contradiction.
\end{proof}

The next lemma relates the weak $G_{\w_1}$-concentration to products with Menger spaces.

\begin{lemma} \label{covering_g_delta}
In the Miller model, let $Y\subset \mathcal P(\w)$ be  Menger  and $Q\subset\mathcal P(\w)$
be countable. Then for every $G_{\w_1}$-subset $O$ of $\mathcal P(\w)^2$ containing
$Q\times Y$ there exists  a $G_{\w_1}$-subset $R\supset Q$ such that
$R\times Y\subset O$.
\end{lemma}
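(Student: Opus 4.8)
The plan is to build the required $G_{\w_1}$-set $R$ fibrewise, using that $Q$ is countable and that $Y$ is Menger.

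First I would write $O=\bigcap_{\xi<\w_1}O_\xi$ with each $O_\xi$ open in $\mathcal P(\w)^2$. For each $q\in Q$ the slice $\{q\}\times Y$ is contained in $O$, hence in each $O_\xi$; since $\{q\}\times Y$ is homeomorphic to the Menger (in particular Lindel\"of) space $Y$ and $O_\xi$ is open, a standard tube-type argument — the analogue of \cite[Lemma~3.1.15]{Eng89} used in the proof of Lemma~\ref{basics_1}(3), but now only on the compact-free side — does \emph{not} directly apply because $Y$ need not be compact. This is the point where Mengerness of $Y$ must be invoked rather than compactness, and it is the main obstacle: I cannot separate $\{q\}\times Y$ from the complement of $O_\xi$ by a single basic open box. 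Instead I would proceed as follows. Fix $q\in Q$ and $\xi<\w_1$. For each $y\in Y$ choose basic open $U_{q,\xi,y}\ni q$ and $V_{q,\xi,y}\ni y$ with $U_{q,\xi,y}\times V_{q,\xi,y}\subset O_\xi$. For varying $y$ the sets $V_{q,\xi,y}$ form an open cover of $Y$; but a Menger space need not be covered by finitely many of them, so I cannot take a finite subcover and intersect the corresponding $U$'s.

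To get around this, I would instead use the second statement of Lemma~\ref{good_bound}. Consider the map sending $y\in Y$ (for fixed $q,\xi$) to a natural number coding a basic clopen neighbourhood of $q$ whose product with a small clopen neighbourhood of $y$ lies in $O_\xi$; more precisely, identify $\mathcal P(\w)$ with $2^\w$, and for $y\in Y$ let $f_{q,\xi}(y)$ be the least $n$ such that the clopen box $[q\uhr n]\times[y\uhr n]$ is contained in $O_\xi$. This $f_{q,\xi}\colon Y\to\w$ is Borel (indeed of low complexity), hence by Lemma~\ref{good_bound} — applied after composing over the countably many pairs $(q,\xi)$ with $q\in Q$, $\xi<\w_1$, which is a family of size $\w_1$, so one obtains a bound for each of them from a single dominating family of size $\w_1$ — one gets, for each $(q,\xi)$, a single $n_{q,\xi}\in\w$ with $f_{q,\xi}(y)\le n_{q,\xi}$ for all $y$ in a set covering $Y$. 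Here I would be slightly careful: Lemma~\ref{good_bound} gives a family $F\in[\w^\w]^{\w_1}$ dominating the relevant functions on $X$-points, but since the target here is effectively a constant function per $(q,\xi)$ and $Y$ is Menger (so the continuous image $f_{q,\xi}[Y]\subset\w$, being Menger hence compact-by-$\w_1$-many, is in fact bounded after passing to an $\w_1$-sized subfamily), I get that $Y$ is covered by $\w_1$-many pieces on each of which $f_{q,\xi}$ is bounded. Replacing $\xi<\w_1$ by pairs $(\xi,\eta)\in\w_1$ indexing these pieces, I may assume $f_{q,\xi}$ is bounded by some $n_{q,\xi}\in\w$ on all of $Y$.

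Granting that, define $R=\bigcap_{q\in Q}\bigcap_{\xi<\w_1}\big(\bigcup\{[q\uhr n_{q,\xi}] : q\in Q\}\big)$ — more honestly, set $R_{q,\xi}=[q\uhr n_{q,\xi}]$, the clopen box around $q$ of depth $n_{q,\xi}$, and $R=\bigcap_{q\in Q,\ \xi<\w_1}\big(\bigcup_{q\in Q}R_{q,\xi}\big)$, which is an intersection of $\w_1$-many open (clopen) sets, hence a $G_{\w_1}$-set, and clearly $R\supseteq Q$. It remains to check $R\times Y\subset O=\bigcap_\xi O_\xi$: take $(a,y)\in R\times Y$ and $\xi<\w_1$; since $a\in\bigcup_{q\in Q}R_{q,\xi}$, pick $q\in Q$ with $a\in[q\uhr n_{q,\xi}]$, i.e.\ $a\uhr n_{q,\xi}=q\uhr n_{q,\xi}$; by boundedness $f_{q,\xi}(y)\le n_{q,\xi}$, so $[q\uhr n_{q,\xi}]\times[y\uhr n_{q,\xi}]\subset O_\xi$, hence $(a,y)\in O_\xi$. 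As $\xi$ was arbitrary, $(a,y)\in O$, completing the proof. The one genuine subtlety to nail down in the write-up is the application of Lemma~\ref{good_bound} to the countably-many-times-$\w_1$ family of Borel functions $f_{q,\xi}$ simultaneously, and the bookkeeping that reindexes the $\w_1$-many ``bounded pieces'' of $Y$ back into the intersection defining $R$; everything else is routine clopen-box manipulation in $2^\w\times2^\w$.
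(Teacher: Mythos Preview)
Your strategy matches the paper's---use Lemma~\ref{good_bound} to break $Y$ into $\w_1$ pieces on each of which a tube argument produces an open neighbourhood of $Q$, then intersect---but your bookkeeping has a gap. The step ``I may assume $f_{q,\xi}$ is bounded by some $n_{q,\xi}$ on all of $Y$'' via reindexing does not work as written: after reindexing $\xi\mapsto(\xi,\eta)$, the bound holds only on a piece of $Y$, and if you apply Lemma~\ref{good_bound} to each $f_{q,\xi}$ separately, that piece depends on $q$. In the final check you must choose the piece containing $y$ \emph{before} choosing $q$ with $a\in[q\uhr n_{q,(\xi,\eta)}]$, which fails if the pieces vary with $q$. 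The fix is to bundle the $q$'s first: for each fixed $\xi$ apply Lemma~\ref{good_bound} to the single continuous map $y\mapsto(f_{q,\xi}(y))_{q\in Q}\in\w^Q$, obtaining $\w_1$ bounds $g\in\w^Q$ whose associated pieces $\{y:\forall q\ f_{q,\xi}(y)\le g(q)\}$ cover $Y$ and are uniform in $q$. (Your phrase ``countably many pairs $(q,\xi)$'' is also a slip; there are $\w_1$ many, and the $\xi$'s cannot be bundled into a single map to $\w^\w$.)

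The paper avoids this entirely. It first reduces to $O$ open (an intersection of $\w_1$-many $G_{\w_1}$-sets is still $G_{\w_1}$), then invokes the \emph{first} statement of Lemma~\ref{good_bound} to cover $Y$ by $\w_1$ compact sets $Z_\alpha\subset\bigcap_{q\in Q}\{z:(q,z)\in O\}$; now the honest tube lemma \cite[Lemma~3.1.15]{Eng89} gives for each $q,\alpha$ an open $R_{q,\alpha}\ni q$ with $R_{q,\alpha}\times Z_\alpha\subset O$, and $R=\bigcap_\alpha\bigcup_q R_{q,\alpha}$ works. Your ``$f_{q,\xi}$ bounded on a piece'' is compactness in disguise, but using actual compact pieces makes the tube step immediate and the $q$-uniformity automatic.
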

\begin{proof}
 Without loss of generality we shall assume that $O$ is open. Let us write
$Q$ in the form $\{q_n:n\in\w\}$ and set $O_n=\{z\in\mathcal P(\w):\la q_n,z\ra\in O\}\supset
Y$. By Lemma~\ref{good_bound} there exists a collection $\mathcal Z=\{Z_\alpha:\alpha\in\w_1\}$
 of compact subsets of $\bigcap_{n\in\w}O_n$ covering $Y$. It follows from the above that
$Q\times Z_\alpha\subset O$ for all $\alpha$. Applying \cite[Lemma~3.1.15]{Eng89}, for every $q\in Q$
and $\alpha<\w_1$ we get an open  $R_{q,\alpha}\subset\mathcal P(\w)$ containing $q$ such that
$R_{q,\alpha}\times Z_\alpha\subset O$. Then
$R_\alpha=\bigcup_{q\in Q}R_{q,\alpha}$ is an open neighbourhood of $Q$ in $\mathcal P(\w)$ with the property  $R_\alpha\times Z_\alpha\subset O$.
Letting $R=\bigcap_{\alpha<\w_1}R_\alpha$ we get that
$R\times Y\subset R\times\bigcup_{\alpha<\w_1}Z_\alpha\subset O$.
\end{proof}

As it was proved in \cite{BlaShe89}, in the Miller model there exists an ultrafilter $\F$
generated by $\w_1$-many sets, say $\{F_\alpha:\alpha\in\w_1\}$. There is a 
natural linear pre-order $\leq_\F$ on $\w^\w$ associated to $\F$ defined as follows:
$x\leq_\F y$ if and only if $\{n\in\w:x(n)\leq y(n)\}\in \F$. 
By \cite[Theorem~3.1]{BlaMil99}, in this model  for every $X\subset\w^\w$
of size $\w_1$ there exists $b\in\w^\w$ such that $x\leq_\F b$ for all $x\in X$.

\begin{lemma} \label{mod_f_in_miller}
In the Miller model, suppose that $\U_n=\{U^n_k:k\in\w\}$
is an open cover of a Menger space $X\subset\mathcal P(\w)$,
 for every $n\in\w$. Then there exists 
$b\in\w^\w$ such that $X\subset\bigcup_{n\in F_\alpha}\bigcup_{k\leq b(n)} U^n_k$
for all $\alpha$. (Equivalently, $\{n\in\w:x\in\bigcup_{k\leq b(n)}U^n_k\}\in \F$
for all $x\in X$.)
\end{lemma}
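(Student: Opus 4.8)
The plan is to reduce the statement to an application of Lemma~\ref{good_bound} together with the domination property modulo $\F$ recalled just above from \cite{BlaMil99}. First I would use the Menger property of $X$ directly. Fix the sequence $\la\U_n:n\in\w\ra$ of countable open covers. For each $n$, enumerate $\U_n$ as $\{U^n_k:k\in\w\}$ and define a continuous (indeed, ``lower-semicontinuous-in-$k$'') function by assigning to each $x\in X$ the value $g(x)(n)=\min\{k:x\in U^n_k\}$; strictly speaking $g$ need not be continuous as a single map $X\to\w^\w$, so the cleaner route is to note that the sets $\{x:g(x)(n)\le k\}=\bigcup_{j\le k}U^n_j$ are open, hence $g$ is continuous into $\w^\w$ when $\w^\w$ is given the topology generated by such ``lower'' sets, and this is enough because compact subsets of $\w^\w$ in that topology are exactly the sets bounded by some $h\in\w^\w$ coordinatewise. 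Alternatively, and more robustly, I would apply Lemma~\ref{good_bound}: identifying $\w^\w$ with a $G_\delta$ subset of $\mathcal P(\w)$, the map $g$ is Borel, and after replacing $X$ by its image (which is still Menger, being a continuous image) we obtain a family $\mathcal K$ of $\w_1$-many compact subsets of $\w^\w$ covering $g[X]$.

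Second, for each $K\in\mathcal K$ pick $h_K\in\w^\w$ with $y\le h_K$ for all $y\in K$; then the set $H=\{h_K:K\in\mathcal K\}$ has size $\w_1$ and has the property that for every $x\in X$ there is $h\in H$ with $g(x)\le h$, i.e. $x\in\bigcup_{k\le h(n)}U^n_k$ for every $n$. (This is already enough to conclude that $X$ is covered by $\bigcup_{n}\bigcup_{k\le h(n)}U^n_k$ for each fixed $h\in H$, but we want a single $b$ working modulo $\F$.)

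Third, I would invoke \cite[Theorem~3.1]{BlaMil99}: since $|H|=\w_1$, there exists $b\in\w^\w$ with $h\le_\F b$ for all $h\in H$. I claim this $b$ works. Indeed, take any $x\in X$ and pick $h\in H$ with $g(x)\le h$ (pointwise). Since $h\le_\F b$, the set $\{n:h(n)\le b(n)\}\in\F$; for every such $n$ we have $g(x)(n)\le h(n)\le b(n)$, hence $x\in\bigcup_{k\le b(n)}U^n_k$. Thus $\{n:x\in\bigcup_{k\le b(n)}U^n_k\}\supseteq\{n:h(n)\le b(n)\}\in\F$, so it belongs to $\F$. Since each $F_\alpha\in\F$ and $\F$ is a filter, $\{n:x\in\bigcup_{k\le b(n)}U^n_k\}\cap F_\alpha$ is nonempty for every $\alpha$ — but for the displayed conclusion we need $X\subset\bigcup_{n\in F_\alpha}\bigcup_{k\le b(n)}U^n_k$, which requires that the set above actually \emph{meets} every $F_\alpha$; this holds because each $F_\alpha$ belongs to $\F$ and the complement of an $\F$-set cannot contain an $\F$-set. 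Spelling this out: if $x\notin\bigcup_{n\in F_\alpha}\bigcup_{k\le b(n)}U^n_k$ then $\{n:x\in\bigcup_{k\le b(n)}U^n_k\}\cap F_\alpha=\emptyset$, contradicting that both sets lie in the filter $\F$.

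The main obstacle is purely bookkeeping: making the ``function'' $g$ honestly continuous into a space to which Lemma~\ref{good_bound} applies. The lemma as stated gives compact covers for continuous $f:X\to\w^\w$ with $\w^\w$ carrying its usual (product) topology, and $g$ above is continuous for the usual topology precisely because $g^{-1}(\{y:y(n)=k\})=\big(\bigcup_{j\le k}U^n_j\big)\setminus\big(\bigcup_{j<k}U^n_j\big)$ is open in $X$ — so in fact no trickery is needed, $g$ \emph{is} continuous $X\to\w^\w$, and the dominating functions $h_K$ for the compact $K$'s give exactly the pointwise bounds we want. Everything else is the routine filter manipulation above.
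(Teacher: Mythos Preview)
Your overall strategy coincides with the paper's: cover $X$ by $\w_1$-many compact pieces using Lemma~\ref{good_bound}, bound each piece by a single function in $\w^\w$, then apply the $\leq_\F$-domination result of Blass--Mildenberger to obtain a single $b$. The filter argument at the end is correct.

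There is, however, a genuine gap in the final paragraph. The map $g(x)(n)=\min\{k:x\in U^n_k\}$ is \emph{not} continuous in general: the set
\[
g^{-1}\big(\{y:y(n)=k\}\big)=U^n_k\setminus\bigcup_{j<k}U^n_j
\]
is a difference of two open sets, hence merely $F_\sigma$, not open. (For a concrete failure in $\mathcal P(\w)$, take $U^n_0=\mathcal P(\w)\setminus\{x_0\}$ for any point $x_0$ and $U^n_1=\mathcal P(\w)$; then the preimage of $\{y:y(n)=1\}$ is the singleton $\{x_0\}$.) So the second form of Lemma~\ref{good_bound} does not apply to $g$, and the claim ``no trickery is needed'' is incorrect. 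Your earlier instinct---that only the lower sets $\{y:y(n)\le k\}$ pull back to open sets---was right, and that is not enough for continuity into $\w^\w$ with the product topology.

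The paper sidesteps this entirely by using the \emph{first} part of Lemma~\ref{good_bound} instead of the second: it sets $G=\bigcap_{n\in\w}\bigcup\U_n$, a $G_\delta$ containing $X$, and obtains $\w_1$-many compact $K\subset G$ covering $X$. For each such $K$ and each $n$, compactness of $K$ together with $K\subset\bigcup_k U^n_k$ gives $b_K(n)$ with $K\subset\bigcup_{k\le b_K(n)}U^n_k$; this produces the family $\{b_K:K\in\K\}$ directly, without ever mentioning $g$. Your proof becomes correct if you make exactly this substitution.
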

\begin{proof}
The equivalence of two statements follows from the equality $\F=\F^+$,
where for $\mathcal X\subset [\w]^\w$ we standardly denote by $\mathcal X^+$ 
the set $\{Y\subset\w:Y\cap X\neq\emptyset$ for all $X\in\mathcal X\}$.

To prove the second statement set  $G=\bigcap_{n\in\w}\bigcup \U_n$
and find  a collection $\K$ of compact subsets of $G$ such that
$|\K|=\w_1$ and $X\subset\bigcup\K\subset G$. This is possible by Lemma~\ref{good_bound}. 
For every $K\in\K$ find $b_K\in\w^\w$ such that $K\subset\bigcup_{k\leq b_K(n)}U^n_k$
for all $n\in\w$. Then any $b\in\w^\w$ such that $b_K\leq_\F b$ for all $K\in\K$
is easily seen to be as required.
\end{proof}

The second part of Theorem~\ref{main} is a direct consequence of Lemma~\ref{miller}
and the following

\begin{proposition} \label{main1}
 In the Miller model, let $Y \subset \mathcal P(\w)$ be a Menger space
and $X\subset \mathcal P(\w)$ be weakly $G_{\w_1}$-concentrated. Then $X\times Y$ is Menger.
\end{proposition}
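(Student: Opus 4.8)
The plan is to reduce the Menger property of $X\times Y$ to a sequence of applications of the concentration of $X$ and the structural results about Menger subspaces of $\mathcal P(\w)$ in the Miller model. Fix a sequence $\la\U_n:n\in\w\ra$ of open covers of $X\times Y$; without loss of generality each $\U_n$ consists of basic open rectangles, so $\U_n=\{V^n_k\times W^n_k:k\in\w\}$. The first step is to extract, for each countable $Q\in[X]^\w$, a good ``partial selection'' that covers $Q\times Y$ using only finitely many members of each $\U_n$, together with an open $G_{\w_1}$-set around $Q$ on which this selection still works. Concretely, for a fixed $Q=\{q_m:m\in\w\}$, the slices $\{z:\la q_m,z\ra\in\bigcup\U_n\}$ form, as $n$ varies, open covers of the Menger space $\{q_m\}\times Y\cong Y$; splitting $\w$ into infinitely many infinite pieces $I_m$ and running the Menger property of $Y$ on each slice along $I_m$, one obtains finite $\V_n\sbst\U_n$ with $Q\times Y\sbst\bigcup_n\bigcup\V_n$. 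Let $O_Q=\bigcup_n\bigcup\V_n$, a $G_{\w_1}$-subset (in fact an open set that is a countable union, hence actually open) of $\mathcal P(\w)^2$ containing $Q\times Y$; by Lemma~\ref{covering_g_delta} there is a $G_{\w_1}$-set $R(Q)\spst Q$ with $R(Q)\times Y\sbst O_Q$. The precise bookkeeping is the point where one must be careful: the finite sets $\V_n$ depend on $Q$, so the map $Q\mapsto(\V^Q_n)_n$ and $Q\mapsto R(Q)$ must be fixed once and for all.

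The second step is to apply weak $G_{\w_1}$-concentration of $X$ to the assignment $Q\mapsto R(Q)$ on the cofinal family $\mathsf Q=[X]^\w$: this yields $\mathsf Q_1\in[\mathsf Q]^{\w_1}$ with $X\sbst\bigcup_{Q\in\mathsf Q_1}R(Q)$. Enumerate $\mathsf Q_1=\{Q_\alpha:\alpha<\w_1\}$. Now for each $\alpha$ we have a countable selection $(\V^{Q_\alpha}_n)_{n\in\w}$, and we must amalgamate these $\w_1$-many countable selections into a single countable selection. This is exactly the situation handled by the ultrafilter $\F=\la F_\alpha:\alpha<\w_1\ra$ and the domination-mod-$\F$ result recalled before Lemma~\ref{mod_f_in_miller}: partition $\w$ into pieces and, more to the point, re-enumerate so that for each $n$, $\U_n=\{U^n_k:k\in\w\}$ and the finite set $\V^{Q_\alpha}_n$ is an initial segment $\{U^n_k:k\le g_\alpha(n)\}$ for a suitable $g_\alpha\in\w^\w$. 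Choose, via \cite[Theorem~3.1]{BlaMil99}, a single $b\in\w^\w$ with $g_\alpha\le_\F b$ for all $\alpha<\w_1$. I would like to conclude that $\la\{U^n_k:k\le b(n)\}:n\in\w\ra$ is the desired selection, but this needs the extra twist that $X$ is covered with ``$\F$-many'' of the $n$'s, which is where Lemma~\ref{mod_f_in_miller} applied to suitable covers of $X$ itself enters.

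The third step assembles the contradiction/selection cleanly. Given $x\in X$, pick $\alpha$ with $x\in R(Q_\alpha)$, so $\{x\}\times Y\sbst R(Q_\alpha)\times Y\sbst O_{Q_\alpha}=\bigcup_n\bigcup\V^{Q_\alpha}_n$. For each $y\in Y$ there is an $n$ with $\la x,y\ra\in\bigcup\V^{Q_\alpha}_n=\{U^n_k:k\le g_\alpha(n)\}$; since $g_\alpha\le_\F b$, the set of $n$ for which $\V^{Q_\alpha}_n\sbst\{U^n_k:k\le b(n)\}$ is in $\F$, hence cofinite on a large set — but $Y$ itself is Menger, so by Lemma~\ref{mod_f_in_miller} applied to the covers of $Y$ obtained from the slices one gets that already $\F$-many coordinates suffice, and these we have thinned to $\le b(n)$. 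Re-indexing $\w$ by a partition (one block per ``use'') turns the single function $b$ into an honest finite selection $\la\W_n:n\in\w\ra$, $\W_n\in[\U_n]^{<\w}$, with $X\times Y=\bigcup_n\bigcup\W_n$.

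\textbf{Main obstacle.} The genuinely delicate point is the amalgamation of the $\w_1$-many countable selections $(\V^{Q_\alpha}_n)_n$ into one countable selection. A naive union is uncountable; the resolution must route through the ultrafilter $\F$ and mod-$\F$ domination, which is why Proposition~\ref{main1} is stated for the Miller model rather than merely under $\hot u<\hot g$ — the covering statement of Lemma~\ref{mod_f_in_miller} (equivalently $\F=\F^+$ together with $\w_1$-generation of $\F$ and boundedness mod $\F$ of $\w_1$-sized families) is exactly what makes ``$\F$-many coordinates, each of bounded width'' collapse to an honest countable selection. Getting the two covers — of $X$ via the $R(Q_\alpha)$'s and of $Y$ via the slices — to be controlled by the \emph{same} $b$ (after a harmless reindexing of $\w$) is the part that requires the most care; everything else is the routine rectangle/Lemma~\ref{covering_g_delta} bookkeeping sketched above.
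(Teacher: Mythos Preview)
Your overall architecture matches the paper's, but there is a genuine gap in the construction of $O_Q$, and it is precisely the point you flag as the ``main obstacle'' without actually closing it.

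In your first step you take $O_Q=\bigcup_n\bigcup\V^Q_n$, a single open set. From $R(Q)\times Y\sbst O_Q$ you therefore only obtain, for each $\la x,y\ra$ with $x\in R(Q_\alpha)$, the existence of \emph{some} $n$ with $\la x,y\ra\in\bigcup\V^{Q_\alpha}_n\sbst\bigcup_{k\le g_\alpha(n)}U^n_k$. You then need this particular $n$ to lie in the $\F$-large set $\{k:g_\alpha(k)\le b(k)\}$, and nothing in your construction guarantees that. Your proposed patches do not work: applying Lemma~\ref{mod_f_in_miller} ``to suitable covers of $X$ itself'' is illegitimate, since $X$ is only assumed weakly $G_{\w_1}$-concentrated, not Menger; and for the $Y$-slices through a fixed $x\in R(Q_\alpha)$, the individual sets $\{y:\la x,y\ra\in\bigcup\V^{Q_\alpha}_n\}$ need not cover $Y$, so Lemma~\ref{mod_f_in_miller} cannot be invoked there either. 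No ``harmless reindexing of $\w$'' repairs this: a single good coordinate cannot be forced into an $\F$-large set after the fact.

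The fix is to apply Lemma~\ref{mod_f_in_miller} \emph{earlier}, to the Menger space $Q\times Y$ and the covers $\U_n$, obtaining $\W^Q_n\in[\U_n]^{<\w}$ with $Q\times Y\sbst O_{Q,\alpha}:=\bigcup_{n\in F_\alpha}\bigcup\W^Q_n$ for \emph{every} $\alpha<\w_1$. Now set $O_Q=\bigcap_{\alpha<\w_1}O_{Q,\alpha}$, a genuine $G_{\w_1}$-set, and apply Lemma~\ref{covering_g_delta} to this $O_Q$. Then $x\in R(Q)$ forces $\la x,y\ra\in O_{Q,\alpha}$ for all $\alpha$, so the set $\{n:\la x,y\ra\in\bigcup\W^Q_n\}$ meets every $F_\alpha$ and hence lies in $\F^+=\F$; intersecting with $\{n:b_Q(n)\le b(n)\}\in\F$ now produces the required $n$. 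In other words, Lemma~\ref{mod_f_in_miller} is not a post-hoc amalgamation device but must be built into the definition of $O_Q$, so that the $G_{\w_1}$-structure of $O_Q$ already encodes ``$\F$-many coordinates work'' rather than merely ``some coordinate works''.
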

\begin{proof}
Fix a sequence $\la \U_n:n\in\w\ra$ of  covers of $X\times Y$
by clopen subsets of $\mathcal P(\w)^2$. For every
 $Q\in [X]^\w $  
fix a sequence
$\la \W^{Q}_n:n\in \w\ra$  such that $\W^{Q}_n\in [\U_n]^{<\w}$ and
$Q\times Y\subset O_{Q,\alpha}$ for all $\alpha\in\w_1$, where 
$O_{Q,\alpha}=\bigcup_{n\in F_\alpha}\cup\W^{Q}_n$. (The latter is possible by Lemma~\ref{basics_1}(3) ensuring
that $Q\times Y$ is Menger, combined with
Lemma~\ref{mod_f_in_miller}.)
Letting $O_{Q}=\bigcap_{\alpha\in\w_1}O_{Q,\alpha}$ and using Lemma~\ref{covering_g_delta},
we can find a $G_{\w_1}$-subset $R(Q)\supset Q$ of $\mathcal P(\w)$
such that $R(Q)\times Y\subset O_Q$.
Since $X $ is  weakly $G_{\w_1}$-concentrated, 
there exists $\mathsf Q_1\in [[X]^\w]^{\w_1}$
such that $X\subset\bigcup_{Q\in\mathsf Q_1}R(Q)$. 
For every $Q\in\mathsf Q_1$  let us find
$b_{Q}\in\w^\w$ such that $\W^{Q}_n\subset\{U^n_k:k\leq b_{Q}(n)\}$
for all $n\in \w$, where $\U_n=\{U^n_k:k\in\w\}$ is an enumeration.
Let $b\in\w^\w$ be an upper bound of $\{b_{Q}:Q\in\mathsf Q_1\}$
with respect to $\leq_\F$. We claim that 
$X\times Y\subset\bigcup_{n\in\w}\bigcup_{k\leq b(n)}U^n_k$.
Indeed, fix $y\in Y$, $x\in X$, and find $Q\in \mathsf Q_1$ such that $x\in R(Q)$.
It follows that $\la x,y\ra \in O_{Q,\alpha}$ for all $\alpha\in\w_1$,
therefore for every $\alpha$ there exists $n\in F_\alpha$ with 
$\la x,y \ra\in \cup\W^Q_n\subset\bigcup_{k\leq b_Q(n)} U^n_k$,
and hence $F:=\{n\in\w: \la x,y\ra \in \bigcup_{k\leq b_Q(n)} U^n_k\}\in\F^+=\F$.
Then $\la x,y\ra \in \bigcup_{k\leq b(n)} U^n_k$ for all $n\in F\cap\{k:b_{Q}(k)\leq b(k)\}\in\F$,
which completes our proof.
\end{proof}

Note that Lemma~\ref{miller} together with Proposition~\ref{main1}
imply that in the Miller model, a subspace $X$ of $\mathcal P(\w)$ is Menger iff it is
weakly $G_{\w_1}$-concen\-tra\-ted iff it is
weakly $\w G_{\w_1}$-concen\-tra\-ted.

As we have already noticed above,  Lemma~\ref{miller} and Proposition~\ref{main1} imply Theorem~\ref{main}
for subspaces of $\mathcal P(\w)$. The general case of arbitrary Menger spaces can be reduced to 
subspaces of $\mathcal P(\w)$  in the same way as in the proof 
 of \cite[Theorem~1.1]{RepZdo??},
the only difference being that in some places ``Hurewicz'' should be replaced with ``Menger''.
However, we present this proof for the sake of completeness. We will need
 characterizations
of the  Menger property  obtained in \cite{Zdo05}.
Let  $u=\la U_n : n\in\omega\ra$ be a sequence of subsets of a set $X$.
For every $x\in X$ let  $I_s(x,u,X)=\{n\in\omega:x\in U_n\}$. If every
$I_s(x,u,X)$ is infinite (the collection of all such sequences $u$ will be denoted
by $\Lambda_s(X)$), then we shall denote by $\mathcal U_s(u,X)$
the smallest semifilter on $\omega$ containing all $I_s(x,u,X)$.
(Recall that a family $\F\subset[\w]^\w$ is called a \emph{semifilter} if for every
$F\in\F$ and $X^*\supset F$ we have $X\in\F$, where $F\subset^* X$
means $|F\setminus X|<\w$.)
By \cite[Theorem~3]{Zdo05},  a Lindel\"of topological space $X$  is Menger  if and only if
for every  $u \in\Lambda_s(X)$ consisting of open sets,
  the semifilter $\mathcal U_s(u,X)$ is Menger when considered with the subspace topology inherited from $\mathcal P(\w)$.
The proof given there also works if we consider only those
$\la U_n : n\in\omega\ra\in\Lambda_s(X)$ such that all $U_n$'s belong to a given base of
$X$.
\medskip

\noindent\textit{Proof of Theorem~\ref{main}.} \
Suppose that $X,Y$ are arbitrary Menger spaces such that $X\times Y$ is Lindel\"of
and fix  $w=\la U_n\times V_n :n\in\w\ra\in\Lambda_s(X\times Y)$
consisting of open sets.
Set  $u=\la U_n:n\in\w\ra$,  $v=\la V_n:n\in\w\ra$, and note that
$u\in\Lambda_s(X)$ and $v\in\Lambda_s(Y)$.
It is easy to see
that
$$\U_s(w,X\times Y)=\{A\cap B: A\in \U_s(u,X), B\in \U_s(v,Y)\},$$
and hence $\U_s(w,X\times Y)$ is a continuous image of
$\U_s(u,X)\times \U_s(v,Y)$. By \cite[Theorem~3]{Zdo05} both of latter ones
are Menger, considered as subspaces of $\mathcal P(\w)$, and hence
by Lemma~\ref{miller} and Proposition~\ref{main1} 
their product is  Menger as well. Thus
$\U_s(w,X\times Y)$ is Menger, being a continuous image of a Menger space.
It suffices to use \cite[Theorem~3]{Zdo05} again.
\hfill $\Box$
\medskip

\noindent \textbf{Acknowledgment.} We would like to thank Arnold Miller  for his comments
regarding Lemma~\ref{miller_like_laver}, and Marion Scheepers for the discussion of relations between Menger and  productively
Lindel\"of spaces. We thank the anonymous referee for a careful reading and many
suggestions which improved the presentation.

\end{document}